\documentclass{amsart}
\usepackage{amsthm,amsmath,amsfonts,amssymb,amscd,mathrsfs,graphics,diagrams, longtable}
\usepackage{txfonts,pxfonts}
\usepackage{hyperref,supertabular}
\usepackage{latexsym}
\usepackage{pstricks}
\usepackage{graphicx}
\usepackage{curves}

\newtheorem{thm}{Theorem}[section]
\newtheorem{lm}[thm]{Lemma}

\theoremstyle{definition}
\newtheorem{rem}[thm]{Remark}

\theoremstyle{remark}

\newcommand{\mb}{\mathbb}

\renewcommand{\mod}{\operatorname{
mod}}

\newcommand{\Z}{{\mb{Z}}}
\newcommand{\Q}{\mb{Q}}
\newcommand{\C}{\mb{C}}
\newcommand{\pone}{\mb{P}^1}
\renewcommand{\mp}{\mathfrak{p}}
\newcommand{\spec}{\operatorname{Spec}}

\begin{document}
\title[Integral Models]{Integral Models of Extremal Rational Elliptic Surfaces}
\author[Jarvis, Lang, and Ricks]{Tyler J. Jarvis, William E. Lang, and Jeremy R. Ricks}
\dedicatory{Dedicated to the memory of Jeremy R. Ricks and Steven Galovich}
\date{\today}
\maketitle
 
\begin{abstract}
Miranda and Persson classified all extremal rational elliptic surfaces in characteristic zero.  We show that each surface in Miranda and Persson's classification has an integral model with good reduction everywhere (except for those of type $X_{11}(j)$, which is an exceptional case),  and that every extremal rational elliptic surface over an algebraically closed field of characteristic $p > 0$ can be obtained by reducing one of these integral models mod $p$.

\end{abstract} 
 
\section*{Introduction}

An extremal rational elliptic surface is a smooth projective rational elliptic surface $X$ over an algebraically closed field $k$ together with a morphism $f: X \to \pone$ with a section, such that the Mordell-Weil group of the geometric general fibre of $f$ is finite.  Such surfaces were classified by R. Miranda and U. Persson over the complex numbers \cite{mp} and by W. E. Lang over algebraically closed fields of characteristic $p > 0$ in \cite{l1} and \cite{l2}. A summary of these classifications is provided in the appendix.  During his work on \cite{l2}, the senior author became convinced that the extremal rational elliptic surfaces in characteristic $p$ were closely linked to their characteristic zero counterparts.  For instance, there is no surface $X_{3333}$ with four singular fibres of type $I_3$ in characteristic three, but one might guess that the surface of type III in characteristic three comes from reducing $X_{3333}$ $\mod 3$ in such a way that three of the singular fibres come together.   There are several such situations throughout the classification, but at the time, the senior author was unable to formulate a precise theorem covering all cases.

Later, the senior author began to wonder if the characteristic zero extremal rational elliptic surfaces had integral models with good reduction everywhere.  Since a rational elliptic surface is the projective plane with nine points (possibly infinitely near) blown up, this author guessed that such models might exist, and realized that if enough such models could be produced, they would provide the link between characteristic zero and characteristic $p$ discussed in the last paragraph.

This paper completes the program outlined above.  We show that each surface found by Miranda and Persson has an integral model with good reduction everywhere (except for those of type $X_{11}(j)$, which is an exceptional case),  and that every extremal rational elliptic surface over an algebraically closed field of characteristic $p > 0$ can be obtained by reducing one of these integral models mod $p$.  

An interesting feature of this theory is that the integral models of the Miranda-Persson surfaces are not unique, and in fact, we need to use more than one integral model of some of the Miranda-Persson surfaces to get a full set of extremal rational elliptic surfaces in characteristic $p$.  Note that while the Miranda-Persson models are defined over $\Z$, when they are normalized using the conventions in \cite{ta}, the discriminants tend to be divisible by 2 and 3.  When this happens the reduction is no longer an elliptic surface.  Much of the work in this paper deals with getting these powers of 2 and 3 out of the discriminant.  Also, note that the Miranda-Persson classification works in all characteristics except 2, 3, and 5, so that this paper is mostly about the difficulties involving these small primes.

We should emphasize that the models produced here are integral Weierstrass models.  We know by Artin's theory of simultaneous resolution of rational double points \cite{a} that these models can be transformed into integral smooth models, but this requires extension of the base ring, and we do not know how to control how big an extension is needed.

We have confined our investigation to showing the existence of integral models and finding enough integral models to get all the surfaces in characteristic $p$ found in \cite{l1} and \cite{l2}.  We have not tried to study extremal rational elliptic surfaces over non-algebraically closed fields, so that our lifting results are not as strong as those available for elliptic curves.

It would be interesting to try to relate the results found here to the universal elliptic curves studied by Katz and Mazur in \cite{km}.  We have not attempted to do this here.  It would also be interesting to look at the implications of the existence of different integral models for the same surface for moduli of rational elliptic surfaces, but we have no ideas about this at this time. 

Here is a plan of the paper.  In Section~\ref{sec:zero}, we review some results on elliptic curves and surfaces that we will need in the rest of the paper, and set terminology.  In Section~\ref{sec:one}, we discuss integral models of extremal rational elliptic surfaces over the ring of integers $\Z$.  We find that three types of surfaces do not have integral models over $\Z$, and we find integral models for these over rings of integers in number fields in Section~\ref{sec:two}.  In Section~\ref{sec:three}, we look at the reduction $\mod p$ of the models found in Sections~\ref{sec:one} and \ref{sec:two}, and show that these reductions give us a full set of extremal rational elliptic surfaces over an algebraically closed field of characteristic $p$.

\setcounter{section}{-1}
\section{Preliminaries and terminology}\label{sec:zero}

An elliptic surface over a field $k$ is a smooth projective surface $X$ over $k$ together with a morphism $f: X \to C$, where $C$ us a smooth projective curve over $k$, 
and such that all but finitely many fibres are elliptic curves. 
We will always assume $f$ has a fixed section (which we call \emph{zero}) and that there are no exceptional curves of the first kind in any fibre of $f$.

In this paper, we are studying rational elliptic surfaces, which implies $C = {\pone}$. Our surfaces are also extremal, which means that the Mordell-Weil group of the generic fibre is finite.  A complete classification of these surfaces over the complex field $\C$ has been given by Miranda and Persson \cite{mp}, and we will follow the notations of that paper for the most part.  

As is well known, an elliptic surface over ${\pone}$ has a Weierstrass model, which is obtained from the smooth model by contracting those components of each singular fibre which do not meet the zero section.  The resulting contracted surface has at most rational double points as singularities.  The Weierstrass model is birationally isomorphic to the surface defined by the Weierstrass equation
 $$y^2 + a_1xy + a_3y = x^3 + a_2x^2 +a_4x +a_6,$$
where the $a_i$ are polynomials in $t$, an affine coordinate on the base ${\pone}$.  There is a global minimal Weierstrass equation for our surface (see \cite{sil}1 , Ch. VII, Sect. 1, and Ch. VIII, Sect. 8, for discussion of the minimal Weierstrass equation), and the requirement that our surface is rational forces degree $a_i \le i$ for this minimal equation.

We are interested in finding integral models of extremal rational elliptic surfaces. This means we want diagrams

\setlength{\unitlength}{1mm}
\begin{picture}(50,42)(-10,0)
 \curve(43,8, 38,19, 43,31)
 \put(43,8){\vector(1,-1){1}}
 \put(35,20){$g$}
 \put(44,33){$X$}
 \put(45.5,31){\vector(0,-1){6}}
 \put(47,28){$f$}
 \put(44,19){${\pone}$}
 \put(45.5,17){\vector(0,-1){6}}
 \put(47,14){}
 \put(45,6){$\spec(R)$,}
\end{picture}

\noindent where $f$ and $g$ are flat and proper, $R$ is an integral domain (almost always the ring of integers of an algebraic number field), and such that the geometric general fibre of $g$ is the Weierstrass model of an extremal rational elliptic surface.  This setup will be called an \emph{integral model in the weak sense}.
An \emph{integral model in the strong sense} will satisfy the extra condition that it will have good reduction at all primes of $R$, meaning that all fibres of $g$ will be Weierstrass models of elliptic surfaces, and that all the Weierstrass equations will be minimal.  If we use the term integral model without qualification, we will mean integral model in the strong sense.

Our strategy will be to start with the Miranda-Persson models for each type of extremal rational elliptic surface, which are integral models in the weak sense, and  modify them so they become integral in the strong sense.  Therefore we will review briefly how the Weierstrass equations transform under various substitutions.  Here we follow Tate's ``formulaire'' \cite{ta}.

If we have two Weierstrass equations over $R$,
 \begin{align*}
y^2 &+ a_1xy + a_3y = x^3 + a_2x^2 +a_4x +a_6 \text{ and}\\
y^{\prime 2} &+ a_1'x'y' + a_3' y' = x^{\prime 3} + a_2' x^{\prime 2} +a_4' x' +a_6'
 \end{align*}
and an isomorphism of the underlying elliptic curves preserving the origin, then this isomorphism is obtained by a substitution of the form 
 \begin{align*}
x = u^2x' + r\\
y = u^3y' +su^2x' +q,
 \end{align*}
where $u$ is a unit of $R$, and $r$, $s$, and $q$ are elements of $R$.  (We use $q$ instead of Tate's $t$ since we are using $t$ as a coordinate on ${\pone}$.)  The coefficients of the equations are related by
 \begin{align*}
ua_1' &= a_1 + 2s\\
u^2a_2' &= a_2 - sa_1 +3r - s^2\\
u^3a_3' &= a_3 +ra_1 +2q\\
u^4a_4' &= a_4 - sa_3 +2ra_2 - (q + rs)a_1 +3r^2 - 2st\\
u^6a_6' &= a_6 + ra_4 + r^2a_2 + r^3 - qa_3 - q^2 - rqa_1.
 \end{align*}
 
In practice, we will restrict ourselves to substitutions of two types:
\begin{enumerate}
\item[a.)]  Those where $u = 1$, which will be called \emph{$rsq$ substitutions}, and 
\item[b.)]  Those where $r=s=q=0$. These will be called \emph{$u$-substitutions}.
\end{enumerate}

We will be using the auxiliary quantities
 \begin{align*}
b_2 &= a_1^2 + 4a_2\\
b_4 &= a_1a_3 + 2a_4\\
b_6 &=a_3^2 + 4a_6\\
b_8 &= a_1^2a_6 - a_1a_3a_4 + 4a_2a_6 + a_2a_3^2 - a_4^2\\
c_4 &= b_2^2 - 24b_4\\
c_6 &= -b_2^3 + 36b_2b_4 - 216b_6\\
\Delta &= b_2^2b_8 - 8b_4^3 - 27b_6^2 + 9b_2b_4b_6\\
j &= c_4^3/\Delta.
 \end{align*}
 
We know that our Weierstrass equation defines an elliptic curve if and only if $\Delta \ne 0$.

\section{Integral models over $\Z$ 
}\label{sec:one}
\renewcommand{\thesubsection}{\S\thesection.\Alph{subsection}}
\subsection{$X_{11}(j)$}\label{sec:onea}
\ 

This surface is different from all the others in several ways, so we deal with it first.  Here we have a family of surfaces, one for each value of $j$.  The Miranda-Persson model has Weierstrass equation 
 $$y^2 = x^3 +rt^2x + st^3,$$
where $r$ and $s$ are chosen so that $j = 6912r^3/(4r^3 +27s^2)$.  (Note that we use $t$ as an affine coordinate on ${\pone}$, while Miranda-Persson use $u$ and $v$ as projective coordinates.  Also our $j$ differs from theirs by a factor of 1728 and our $\Delta$ differs from theirs by a factor of 16).  We observe also that this surface is a quadratic twist of the constant elliptic curve defined by $y^2 = x^3 + rx + s$.  Since there is no constant elliptic curve over $\Z$ with good reduction everywhere, we do not try to find an integral model in the strong sense for $X_{11}(j)$.  Instead, we construct our model as a quadratic twist of a constant elliptic curve in such a way that our model has good reduction everywhere that the original curve did.         

We begin with an elliptic curve $E$ over a ring $R$ where 2 is not a zero-divisor.  We let $E$ have Weierstrass equation $y^2 + g_1xy + g_3y = x^3 + g_2x^2 +g_4x +g_6$. 

Now we perform an rsq-substitution with $r = 0$, $s = -(1/2)g_1$, and $q = -(1/2)g_3$.  This produces an elliptic curve over $R[1/2]$ with equation $y^2  = x^3 + (g_2+(1/4)g_1^2)x^2 +(g_4 +(1/2)g_1g_3)x + g_6 +(1/4)g_3^2$.  Now we perform a quadratic twist over $R[1/2, (t^2 +4t)^{1/2}]$ and obtain an elliptic surface with equation $y^2 = x^3 + (g_2+(1/4)g_1^2)(t^2 +4t)x^2 +(g_4 +(1/2)g_1g_3) (t^2 +4t)^2x +(g_6 +(1/4)g_3^2) (t^2 +4t)^3$.

Finally, we perform an rsq-substitution with $r =0$, $s = (1/2)g_1t$, and $q = (1/2)g_3t^3$ to get the surface with equation
 \begin{align*}
y^2 &+ g_1txy + g_3t^3 y = x^3 + (g_2t^2 +4g_2t +g_1^2)x^2\\
&+(g_4t^4 +8g_4t^3 +16g_4t^2 +4g_1g_3t^3 + 8g_1g_3t^2)x\\
&+ (g_6t^6 +12g_6t^5 + 48g_6t^4 +64g_6t^3 +3g_3^3t^5 + 12g_3^2t^4 + 16g_3^2t^3).
 \end{align*}

This is the desired model.  Its discriminant is $\Delta_E(t^2 +4t)^6$, where $\Delta_E$ is the discriminant of the original elliptic curve $E$, and its $j$-invariant is the same as the $j$-invariant of $E$.  Our surface is an integral model in the weak sense, and it has good reduction at all points of $\spec(R)$ where the original constant curve has good reduction.

Note that we could not twist over $R[1/2, \sqrt t]$ as in \cite{mp}, for then we would have obtained a model where the singular fibres are at 0 and $\infty$.  This model would have reduced $\mod 2$ to a surface with two singular fibres of a sort which is not permitted by \cite{l2}.
 \bigskip

\subsection{$X_{22}$, $X_{33}$, and $X_{44}$}
\ 

We will show that these surfaces do not have integral models over ${\Z}$.  Note that $X_{22}$ and $X_{44}$ have constant $j$-invariant 0, while $X_{33}$ has constant $j$-invariant $12^3$.  The non-existence of integral models over ${\Z}$ for these surfaces will follow easily from the following lemma, which is well known to the experts (at least in the number field case).

 \begin{lm}\label{lem1}
Suppose E is an elliptic curve over a discrete valuation ring R, and suppose $\operatorname{Fract}(R)$ has characteristic 0. 
 \begin{enumerate}
\item[a)] Suppose 3 is a uniformizer of $R$, and suppose $j(E) = 0$.  Then $E$ has bad reduction.

\item[b)] Suppose 2 is a uniformizer of $R$, and suppose $j(E) = 12^3$.  Then $E$ has bad reduction.
 \end{enumerate}
 \end{lm}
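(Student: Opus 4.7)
The plan is to assume for contradiction that $E$ has good reduction, take a minimal Weierstrass equation with coefficients $a_i \in R$ and discriminant $\Delta \in R^{*}$ (so that the auxiliary quantities $b_i, c_4, c_6$ also lie in $R$), and extract a valuational contradiction from the identities
$$c_4^{\,3} = j\,\Delta, \qquad c_4^{\,3} - c_6^{\,2} = 1728\,\Delta.$$
Throughout, write $v$ for the valuation on $R$.

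For part (a), $j = 0$ combined with $\Delta \neq 0$ forces $c_4 = 0$, so the second identity gives $c_6^{\,2} = -1728\,\Delta$. With $3$ a uniformizer, $v(1728) = v(2^6 \cdot 3^3) = 3$, and good reduction gives $v(\Delta) = 0$; therefore $2\,v(c_6) = 3$, a parity contradiction. This takes care of (a).

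For part (b), $j = 1728$ gives $c_4^{\,3} = 1728\,\Delta$, which combined with $c_4^{\,3} - c_6^{\,2} = 1728\,\Delta$ yields $c_6 = 0$. Reducing the defining formula $c_6 = -b_2^{\,3} + 36\,b_2 b_4 - 216\,b_6$ modulo $2$, and using $b_2 \equiv a_1^{\,2}$, $b_4 \equiv a_1 a_3$, $b_6 \equiv a_3^{\,2} \pmod 2$, shows $c_6 \equiv -a_1^{\,6} \pmod 2$, whence $v(a_1) \geq 1$. This in turn forces $v(b_2) \geq 2$ (so $v(b_2^{\,2}) \geq 4$) and $v(b_4) \geq 1$ (so $v(24\,b_4) \geq 4$), and hence $v(c_4) \geq 4$. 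But the relation $c_4^{\,3} = 1728\,\Delta$, together with $v(1728) = 6$ and $v(\Delta) = 0$, forces $v(c_4) = 2$, a contradiction.

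The main obstacle is part (b): unlike (a), no parity argument works directly from $c_6 = 0$, so one must descend into Tate's formulas for $b_2, b_4, c_4, c_6$ and track $2$-adic valuations of the $a_i$ carefully. The crucial enabling hypothesis is that minimality (forced by good reduction) puts all the $a_i$ in $R$; without this one cannot conclude $v(a_1) \geq 1$ from $c_6 \equiv -a_1^{\,6} \pmod 2$, and the whole argument collapses.
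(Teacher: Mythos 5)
Your proof is correct and follows essentially the same route as the paper: both parts rest on the identities $c_4^3 = j\Delta$ and $c_4^3 - c_6^2 = 1728\Delta$ together with $v(\Delta)=0$ for a minimal equation, with part (a) being the same parity argument and part (b) the same valuational bookkeeping on $c_4 = b_2^2 - 24b_4$ and $b_2 = a_1^2 + 4a_2$. The only cosmetic difference is that in (b) you first extract $v(a_1)\ge 1$ from $c_6 \equiv -a_1^6 \pmod 2$ and conclude $v(c_4)\ge 4$ against $v(c_4)=2$, whereas the paper runs the same chain in the opposite direction (from $v(c_4)=2$ to $v(b_2)=1$, contradicted by $b_2=a_1^2+4a_2$).
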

\begin{proof}
 a.) Suppose $E$ has good reduction.  Since $j(E) = 0$, and $j =c_4^3/\Delta$, we get $c_4 = 0$.  Since $1728\Delta = c_4^3 - c_6^2$ and $v_3(1728\Delta) = v(1728) =3$, while $v_3(c_6^2)$ is even, we have a contradiction.

b.) Since $1728\Delta = c_4^3 - c_6^2$ and $j = c_4^3/\Delta$, the fact that $j =1728$ implies $c_6 = 0$. If we have good reduction at 2, then 4 exactly divides $c_4$.  Since $c_4 = b_2^2 - 24b_4$, this means 2 exactly divides $b_2$.  But $b_2 = a_1^2 + 4a_2$, which gives a contradiction. Now an integral model over ${\Z}$ can be considered as an elliptic curve over ${\Z}[t]$ with good reduction at all prime ideals generated by rational primes.  Applying the lemma to the cases where $R$ equals ${\Z}[t]$ localized at (2) or (3), we see integral models of the three surfaces given cannot exist. 
\end{proof}

We will see in Section~\ref{sec:two} that these surfaces have integral models over rings of integers in number fields.

\subsection{$X_{3333}$}
\

While it would not be useful to show all the details of the way we find the integral model, a sketch of the method may be useful. Some of the other integral models were found using a similar technique.

The Miranda-Persson model of $X_{3333}$ has Weierstrass equation $y^2  = x^3 + (-3t^4 + 24t)x + (2t^6 + 40t^3 -16)$.  The discriminant of this is $\Delta = 2^{12}3^3(t^3 + 1)^3$.  This is an integral model in the weak sense, and to produce an integral model in the strong sense,  we must eliminate the factors of 2 and 3 which appear in $\Delta$.

For this, we use Tate's algorithm at the prime ideals (2) and (3) of ${\Z}[t]$.  Roughly speaking, the algorithm begins by making a series of $rsq$-substitutions.  When these are done, either the algorithm stops, or a $u$-substitution is made, which has the effect of dividing the discriminant by $u^{12}$.  The above steps are then repeated.  In order to get rid of the unwanted factors, it is natural to start by making the discriminant divisible by $3^{12}$.  This is done by replacing $t$ by $9t + 8$.

After making this change, we ran Tate's algorithm separately at the primes (2) and (3).  At the prime (3), the algorithm went through to the end, so that the factor $3^{12}$ could be eliminated.  At the prime (2), the algorithm terminated before the end.  Thus, the minimal version of our modified Miranda-Persson model had good reduction at 3, but bad reduction at 2.  To deal with this, we extended scalars to the Gaussian integers, and ran the algorithm again.  We obtained a new surface whose Weierstrass equation had integer coefficients, had good reduction at 2, and was isomorphic to the Miranda-Persson model over ${\Q}(i)$.

Finally, we used the Chinese remainder theorem to get substitutions that achieved the desired goals simultaneously at 2 and 3.  The final integral model is $y^2 + a_1xy + a_3y = x^3 + a_2x^2 + a_4x + a_6$, with the $a_i$ defined below.
 \begin{align*}
a_1 &= 171t\\
a_2 &= 16-7353t\\
a_3 &= -3\\
a_4 &= 594t^4 -54t^3 -528t^2 +214t +76\\
a_6 &= -2700t^6 +648t^5 + 3924t^4 - 3t^3 - 1682t^2 - 304t +88
 \end{align*}
The discriminant is $\Delta = -(t + 1)^3(27t^2 +45t +19)^3$.
 \bigskip
 
\subsection{$X_{222}$}
\  

This is the only model found by a computer search, which was carried out with the assistance of Jason Grout. The singular fibres of this surface are of types $I_2^*$, $I_2$, and $I_2$.  So we begin by writing down a Weierstrass equation for a surface with a singular fibre of type $I_2^*$ at $t = 0$.  This equation has coefficients 
 \begin{align*}
a_1 &= tc_0\\
a_2 &= t(a + bt)\\
a_3 &= t^3d_0\\
a_4 &= t^3(ct + d)\\
a_6 &= t^5(et + f). 
 \end{align*}
We then let the computer vary $c_0$, $d_0$, $a$, $b$, $c$, $d$, $e$, and $f$, until it found a surface whose discriminant was $t^8$ times a perfect square.  Again, we are grateful to Jason Grout for help with the computer programming and with selecting the search region.  An integral model yielded by this search has Weierstrass coefficients 
 \begin{align*}
a_1 &= t\\
a_2 &= t + t^2\\
a_3 &=4t^3\\
a_4 &= 2t^4\\
a_6 &= 4t^5 + t^6.
 \end{align*}
The discriminant is $\Delta = -t^8(16 +40t +89t^2)^2$. 
 \bigskip
 
\subsection{$X_{321}$ }
\ 

We found two integral models for this surface.  The first integral model we call 321A, which has Weierstrass coefficients
 \begin{align*}
a_1 &= 1\\
a_2 &= -1\\
a_3 &= 6t + 4\\
a_4 &= -4t- 2\\
a_6 &= -9t^2 -12t -4\\
\Delta &= t^2(64t + 9).
 \end{align*}

A second integral model of this surface will be needed in Section~\ref{sec:three}.  This integral model we call 321$B$, with Weierstrass coefficients
 \begin{align*}
a_1 &= t\\
a_2 &= 0\\
a_3 &= 0\\
a_4 &= t^3\\
a_6 &= 0\\
\Delta &= t^9(t - 64).
 \end{align*}
(This is the simplest possible surface with a fibre of type III* at $t = 0$.) 

Notice that these surfaces are not isomorphic over ${\Z}$.  The reduction of model $321A \mod 3$ has two singular fibres, while the reduction of model $321B \mod 3$ has three singular fibres.  This example shows that integral models of extremal rational elliptic surfaces are not unique.

In the remaining cases we simply give the Weierstrass coefficients of the integral models.  In some of the cases, we will need two models of the same surface.
 \bigskip

\subsection{$X_{9111}$ }
\

 \begin{tabbing}
\hskip 1.25in \= $a_1 = t$\\
\> $a_2 = 0$\\
\> $a_3 = -1$\\
\> $a_4 = 0$\\
\> $a_6 = 0$\\
\> $\Delta = -(t + 3)(t^2 - 3t + 9)$.
 \end{tabbing}

\subsection{ $X_{5511}$}
\ 

 \begin{tabbing}
\hskip 1.25in \= $a_1 = 5t +1$\\
\> $a_2 = -6t^2 - 4t - 3$\\
\> $a_3 = 1$\\
\> $a_4 = 2$\\
\> $a_6 = - t - 1$\\
\> $\Delta = t^5(t^2 - 11t - 1)$.
 \end{tabbing}

\subsection{$X_{8211}$}
\ 
 
 \begin{tabbing}
\hskip 1.25in \= Model 8211A\\
\> $a_1 = 1$\\
\> $a_2 = 32t^2$\\
\> $a_3 = 0$\\
\> $a_4 = 256t^4$\\
\> $a_6 = -t^6-64t^4$\\
\> $\Delta = t^2(1 + 16t^2)$.\\[4mm]
\> Model 8211B\\
\> $a_1 = t$\\
\> $a_2 = 128$\\
\> $a_3 = 0$\\
\> $a_4 = 21t^2 + 5461$\\
\> $a_6 = 441t^2 + 77568$\\
\> $\Delta = t^2(t^2 + 16)$.
 \end{tabbing}

\subsection{$X_{6321}$ }
\ 

 \begin{tabbing}
\hskip 1.25in \= Model 6321A\\
\> $a_1 = 1$\\
\> $a_2 = 4t^2 + 2t$\\
\> $a_3 = t$\\
\> $a_4 = 2t^6$\\
\> $a_6 = 0$\\
\> $\Delta = t^3(t +1)(-1 + 8t)^2$.\\[4mm]
\> Model 6321B\\
\> $a_1 = t$\\
\> $a_2 = 1 + t$\\
\> $a_3 = 2t^2 + t$\\
\> $a_4 = t - t^3$\\
\> $a_6 = -t^3 - t^4$\\
\> $\Delta = (t + 8)(t -1)^2t^3$.
 \end{tabbing}

\subsection{$X_{4422}$}
\ 

 \begin{tabbing}
\hskip 1.25in \= $a_1 = 1$\\
\> $a_2 = 4t^2$\\
\> $a_3 = 4t^2$\\
\> $a_4 = -t^2$\\
\> $a_6 = -4t^4$\\
\> $\Delta = t^4(4t - 1)^2(4t + 1)^2$.
 \end{tabbing}

\subsection{$X_{211}$}
\ 

 \begin{tabbing}
\hskip 1.25in \= $a_1 = 1$\\
\> $a_2 = 0$\\
\> $a_3 = 0$\\
\> $a_4 = -72t(432t + 1)(373248t^2 + 864t + 1)$\\
\> $a_6 = 557256278016t^5 + 2257403904t^4 + 2985984t^3 + 864t^2 - t$\\
\> $\Delta = t(432t + 1)(864t+1)^{10}$.
 \end{tabbing}

\subsection{$X_{431}$}
\ 

 \begin{tabbing}
\hskip 1.25in \= $a_1 = 1$\\
\> $a_2 = -27t$\\
\> $a_3 = 0$\\
\> $a_4 = 243t^2$\\
\> $a_6 = -729t^3 - 27t^2 + t$\\
\> $\Delta = -t(27t +1)^3$.
 \end{tabbing}

\subsection{$X_{411}$}
\ 

 \begin{tabbing}
\hskip 1.25in \= $a_1 = 1$\\
\> $a_2 = 32t +3$\\
\> $a_3 = 3$\\
\> $a_4 = 256t^2 + 64t + 2$\\
\> $a_6 = 192t^2 + 31t -1$\\
\> $\Delta = t(16t + 1)$.
 \end{tabbing}

\subsection{$X_{141}$}
\ 

 \begin{tabbing}
\hskip 1.25in \= $a_1 = 1$\\
\> $a_2 = -256t^2 + 24t$\\
\> $a_3 = 10t^2 + 20t$\\
\> $a_4 = -117t^2$\\
\> $a_6 = -25t^4 -100t^3 - 112t^2 +t$\\
\> $\Delta = t(16t - 1)^7$.
 \end{tabbing}

\section{Integral models over larger rings}\label{sec:two}

In this section, we find integral models over rings of integers of algebraic number fields of the three surfaces which do not have integral models over ${\Z}$.
 \bigskip

\subsection{$X_{33}$}
\

This surface has $j$-invariant $12^3$ and two singular fibres of types III and III*.  Our approach to finding an integral model was to start with a constant elliptic curve over ${\Z}[i]$ with $j$-invariant $12^3$ and with good reduction at $(1+i)$, the unique prime ideal of ${\Z}[i]$ lying over 2.  We then replaced this with an elliptic surface with singular fibres of the desired types which is $(1+i)$-adically close to our constant curve.  It was necessary choose the new surface to be sufficiently close $(1+i)$-adically to the constant curve to have good reduction at $(1+i)$, but not so close as to have constant reduction (that is, reduction not depending on $t$) at $(1+i)$. 

Our starting curve was defined by $y^2 = x^3 +(-1 + 2i)x$.  (This is an interesting curve over ${\Z}[i]$, since its minimal model only has bad reduction at one Gaussian prime, namely $(-1 +2i)$.  We replaced this by the surface with Weierstrass equation $y^2 = x^3 +(8t - 1 + 2i)x$.  After running Tate's algorithm at $(1+i)$, we found an integral model in the strong sense over ${\Z}[i]$ of $X_{33}$.  Its Weierstrass coefficients are
 \begin{align*}
a_1 &= 1 - i\\
a_2 &= -i\\
a_3 &= -i\\
a_4 &= -2t\\
a_6 &= it\\
\Delta &= (8t - 1 + 2i)^3.
 \end{align*}

We found integral models in the remaining cases over ${\Z}[3^{1/4}]$.  According to the computer package \emph{pari}, this is the full ring of integers in $\Q(3^{1/4})$, and this ring has class number one.  We omit the method of construction of the models, which was similar to the previous case.
 \bigskip

\subsection{$X_{22}$}
\ 

 \begin{align*}
a_1 &= 0\\
a_2 &= -16\sqrt 3\\
a_3 &= 27\cdot3^{1/4}\\
a_4 &= 256\\
a_6 &= t - 637\\
\Delta &= -169 - 312t\sqrt 3 - 432t^2.
 \end{align*}

\subsection{$X_{44}$}
\ 

 \begin{align*}
a_1 &= 0\\
a_2 &= -16\sqrt 3\\
a_3 &= 27\cdot3^{1/4}\\
a_4 &= 256\\
a_6 &= -376\sqrt 3 +97t + 3\sqrt 3t^2\\
\Delta &= -(1/9)(18t +97\sqrt 3)^4.
 \end{align*}

Note that the constant term of $\Delta$ is not divisible by $3^{1/4}$ in ${\Z}[3^{1/4}]$.

\section{Reductions}\label{sec:three}

In this section, we find the reductions modulo primes of the surfaces found in the first two sections.  The list of characteristic $p$ surfaces produced will be sufficient to show that all extremal rational elliptic surfaces in characteristic $p$ come from characteristic zero.  Proofs will not be given, since checking that the reduced surfaces are of the stated types is an easy exercise for the reader.  However, we will prove the following lemma, which may speed the task of verifying the results.

 \begin{lm}
Suppose $X \to \pone_R \to {\rm Spec}\, R$ is a minimal Weierstrass model of an extremal rational elliptic surface, where $R$ is the ring of integers of an algebraic number field.  Suppose $\mp$ is a prime of $R$ and suppose $X$ has good reduction at $\mp$, which means that the geometric general fibre $X_\mp \to R/\mp \equiv k$ is the minimal Weierstrass model of an elliptic surface.  Then $X_\mp$ is also an extremal rational elliptic surface.
 \end{lm}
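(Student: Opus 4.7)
The plan is to verify two things: that $X_\mp$ (after resolving rational double points) is rational, and that its Mordell-Weil group is finite. For rationality, I would use flatness of $g \colon X \to \spec R$ to conclude that $\chi(\mc O_{X_\mp}) = \chi(\mc O_{X_\eta}) = 1$, since the Euler characteristic of the structure sheaf is constant on fibers of a flat proper family. Because a Weierstrass model has at worst rational double point singularities, the minimal resolution $\tilde X_\mp \to X_\mp$ preserves this invariant, giving $\chi(\mc O_{\tilde X_\mp}) = 1$. Combined with the facts that $\tilde X_\mp \to \pone_k$ is elliptic and has a section (inherited from the given section of $X \to \pone_R$), this forces $\tilde X_\mp$ to be a rational elliptic surface: the canonical bundle formula yields $K = -F$, which has negative degree on fibers, forcing Kodaira dimension $-\infty$ and, together with $q = 0$, rationality.

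For finiteness of Mordell-Weil, the strategy is to combine Lang--N\'eron (so $\mathrm{MW}$ is finitely generated) with a rank-zero argument via Shioda--Tate. For any rational elliptic surface $Y$ over any algebraically closed field, $\rho(Y) = b_2(Y) = 10$, so Shioda--Tate gives $\mathrm{rank}\,\mathrm{MW}(Y) = 8 - \sum_v (m_v - 1)$, where the sum runs over singular fibers and $m_v$ counts components. Extremality of $X_\eta$ gives $\sum_v (m_v - 1) = 8$; equivalently, the trivial lattice $T_\eta$ spanned by the zero section, a general fiber, and all fiber components not meeting the zero section has rank $10$ and is of finite index in $\mathrm{NS}(\tilde X_\eta)$. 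I would then extend $R$ if necessary so that a simultaneous resolution $\tilde{\mc X} \to \mc X$ exists by Artin's theorem, and invoke injectivity of the specialization map $\mathrm{NS}(\tilde X_\eta) \to \mathrm{NS}(\tilde X_\mp)$ in this smooth proper family. The crucial observation is that any fiber component of $\tilde X_\eta$ specializes to a divisor supported in a single fiber of $\tilde X_\mp$, because the closure in $\pone_R$ of a closed point of $\pone_\eta$ is a horizontal section reducing to a closed point of $\pone_\mp$. Hence $T_\eta$ maps into the trivial lattice $T_\mp$ of $\tilde X_\mp$, so $\mathrm{rank}\,T_\mp \ge 10$, which (being at most $\rho(\tilde X_\mp) = 10$) equals $10$, yielding $\mathrm{rank}\,\mathrm{MW}(\tilde X_\mp) = 0$.

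The main obstacle is the singularity of the Weierstrass model along its rational double points: to apply NS specialization cleanly in a smooth proper family, one needs a simultaneous resolution, which per Artin's theorem may require a base extension of $R$. This is largely a bookkeeping matter since the geometric Picard number, the trivial lattice, and the geometric Mordell-Weil group are all invariant under further algebraically closed field extensions, but one must record that such an extension preserves both the extremality hypothesis on the generic fiber and the statement to be proved on the special fiber. A secondary subtlety is verifying that the section and the fibration $\pone_R \to \spec R$ behave well under reduction, but these are immediate from flatness of the given data.
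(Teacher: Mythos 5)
Your proposal is correct and follows essentially the same route as the paper: enlarge $R$ so that Artin's simultaneous resolution yields a smooth proper family, then specialize the rank-$8$ sublattice of the N\'eron--Severi group spanned by fibre components not meeting the zero section (equivalently, your rank-$10$ trivial lattice via Shioda--Tate) to conclude extremality of the special fibre. You additionally spell out the rationality of $X_\mp$ via constancy of $\chi(\mathcal{O})$ and the canonical bundle formula, and the injectivity of the specialization map, details the paper's proof leaves implicit.
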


\begin{proof} By enlarging $R$ to a bigger ring $S$ and using Artin's theory \cite{a} of simultaneous resolution of rational double points, we get a smooth model $Y$ of $X$ such that $Y \to {\rm Spec}\, S$ is smooth.  By enlarging $S$ further if necessary, we may assume that all components of singular fibres are rational over $S$.  Since $Y$ is extremal, the sublattice of $NS(Y)$ (the Neron-Severi group) generated by those components of singular fibres not meeting the zero solution is of rank 8.  Therefore the sublattice of $NS(Y_{\mp})$ generated by components of fibres not meeting the zero solution also has rank 8 and we see $Y_{\mp}$ (and hence also $X_{\mp}$) is extremal.
\end{proof}
A slight extension of the above argument gives
 \begin{lm}
The order of the Mordell-Weil group of $X_{\mp}$ divides the order of the Mordell-Weil group.  (Here we use Mordell-Weil group to mean the order of the Mordell-Weil of the general fibre of $X \to \pone_K$, where $K$ is an algebraically closed field.)
 \end{lm}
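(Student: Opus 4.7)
The plan is to use the Shioda--Tate description of the Mordell--Weil group as the quotient $NS(Y)/T(Y)$ by the \emph{trivial sublattice} $T(Y)$ (generated by the class of a general fibre, the zero section, and those components of reducible fibres not meeting the zero section), and to compare the trivial sublattices at the generic and special fibres inside a single ambient N\'eron--Severi group. For an extremal rational elliptic surface this quotient is the entire finite Mordell--Weil group, so $|MW(Y)| = [NS(Y) : T(Y)]$.

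Following the setup of the preceding lemma, I would enlarge $R$ to a ring $S$ so that $Y \to \spec S$ is smooth and projective with all components of all fibres rational over $S$. Since each geometric fibre is a rational surface one has $h^{0,1} = h^{0,2} = 0$, hence $\operatorname{Pic}_{Y/S}$ is \'etale over $S$; therefore the specialization map $NS(Y_K) \to NS(Y_\mp)$ is an isomorphism of rank $10$ lattices. Call this common lattice $N$.

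Next I would check that $T(Y_K) \subseteq T(Y_\mp)$ as sublattices of $N$. The zero section specializes to the zero section, fibre classes to fibre classes, and each irreducible component of a reducible generic fibre over a point $t_0 \in \pone_K$ specializes to a component of the singular special fibre lying over the reduction of $t_0$. Distinct reducible generic fibres may collide under reduction, but irreducible components of fibres cannot split; so every generator of $T(Y_K)$ lands in $T(Y_\mp)$, yielding the claimed inclusion. Since both $Y_K$ and $Y_\mp$ are extremal (by the previous lemma), both trivial sublattices have rank $10$, and the index $[T(Y_\mp) : T(Y_K)]$ is a finite positive integer.

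The lemma then follows by the chain
$$|MW(Y_K)| = [N : T(Y_K)] = [N : T(Y_\mp)] \cdot [T(Y_\mp) : T(Y_K)] = |MW(Y_\mp)| \cdot [T(Y_\mp) : T(Y_K)],$$
so $|MW(Y_\mp)|$ divides $|MW(Y_K)|$. The step I expect to be the main obstacle is justifying the specialization isomorphism $NS(Y_K) \cong NS(Y_\mp)$: this requires smoothness of the relative Picard functor (which does hold because the fibres are rational with $h^{0,1}=h^{0,2}=0$) together with a possibly further enlargement of $S$ ensuring that every divisor class of the generic fibre extends over $\spec S$. Once that foundational point is in place, everything else is a bookkeeping exercise with the Shioda--Tate exact sequence.
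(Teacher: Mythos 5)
Your argument is correct and is essentially the ``slight extension of the above argument'' that the paper invokes without writing out: pass to a simultaneous resolution over an enlarged base, identify $NS(Y_K)$ with $NS(Y_\mp)$ by specialization, and compare the trivial (Shioda--Tate) sublattices to get $|MW(Y_K)| = |MW(Y_\mp)|\cdot[T(Y_\mp):T(Y_K)]$. One cosmetic point: you do not actually need the assertion that irreducible fibre components ``cannot split'' under specialization --- it suffices that each such component specializes to a class supported on the corresponding special fibre, which already lies in $T(Y_\mp)$.
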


 \begin{lm}
Let $p$ be a prime, $p \ne 2, 3$.  Then the reduction $\mod p$ of each model of an extremal rational elliptic surface found above is an extremal rational elliptic surface of the same type, with one exception.  The reduction at $p = 5$ of the given model of $X_{5511}$ is the unique extremal rational elliptic surface in characteristic 5 with singular fibres of types II, I$_5$, and I$_5$.
 \end{lm}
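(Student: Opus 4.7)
The plan is to reduce the claim to a finite, mechanical case-by-case check against the explicit models tabulated in Sections~\ref{sec:one} and \ref{sec:two}. By the first lemma of this section, once a given model has good reduction at a prime $\mp$, the reduced surface is automatically an extremal rational elliptic surface; so what remains is to identify the Kodaira type of each singular fibre of the reduction.

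The key simplification is that for residue characteristic $p\ne 2,3$, Tate's algorithm collapses: one can pass over the local ring at each $t_0$ to a short Weierstrass form $y^2=x^3+Ax+B$, and the Kodaira type at $t_0$ is read off directly from the triple $(v_{t_0}(c_4),v_{t_0}(c_6),v_{t_0}(\Delta))$ via Tate's table. In particular, multiplicative reduction $I_n$ is characterized by $v(\Delta)=n$, $v(c_4)=0$, while each additive type is pinned down by a small tabulated pair of valuations. For each model the discriminant $\Delta(t)$ is an explicit polynomial whose affine root pattern, together with the order of vanishing at $t=\infty$, reproduces the prescribed generic fibre structure. Reduction modulo $p$ preserves this data except when (i) $p$ divides the resultant of two distinct irreducible factors of $\Delta$, causing two fibres to merge; (ii) $p$ divides the leading coefficient of $\Delta$ or of a relevant local auxiliary polynomial, changing the behaviour at $\infty$; or (iii) $v_p(c_4)$ or $v_p(c_6)$ jumps at a singular point, pushing the type into a different cell of Tate's table.

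For each of the models in Sections~\ref{sec:one} and \ref{sec:two} one then writes down the finite list of primes $p>3$ at which (i)--(iii) could occur, and checks those primes individually. Running through the list, only harmless collisions occur for $p>3$, with a single genuine exception: for $X_{5511}$ the factor $t^2-11t-1$ reduces modulo $5$ to $(t+2)^2$, so the two $I_1$ fibres coalesce at $t=-2$ into a fibre of discriminant valuation $2$. To decide between $I_2$ and II, compute $v_5(c_4)$ at $t=-2$ for the given Weierstrass coefficients: positive valuation forces type II. The $I_5$ at $t=0$ (from the factor $t^5$) and the $I_5$ at $t=\infty$ (from the degree-$5$ drop relative to the generic bound $12$) are preserved, giving the reduction singular fibres II, $I_5$, $I_5$; uniqueness of such a surface in characteristic $5$ is part of Lang's classification \cite{l2}.

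The main obstacle is the organizational burden of running through every model and every small bad prime above $3$, rather than any conceptual difficulty; each individual check is a short valuation computation made elementary by the assumption $p\ne 2,3$.
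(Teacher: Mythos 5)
Your proposal is correct and is essentially the verification the paper deliberately omits (``checking that the reduced surfaces are of the stated types is an easy exercise for the reader''): extremality of each reduction comes from the section's first lemma, the fibre types follow from the $(v(c_4),v(c_6),v(\Delta))$ table valid for residue characteristic $\ne 2,3$, and the $X_{5511}$ exception arises exactly as you compute from $t^2-11t-1\equiv (t+2)^2 \pmod 5$. One small imprecision: your criterion (i) as stated (resultant of two \emph{distinct} irreducible factors of $\Delta$) does not literally cover the case where $p$ divides the discriminant of a \emph{single} irreducible factor of $\Delta$ over $\Q$ --- which is precisely what happens for $X_{5511}$, since $\operatorname{disc}(t^2-11t-1)=125$ --- although your explicit computation for that surface handles it correctly, so the criterion should be stated over the algebraic closure or supplemented accordingly.
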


The existence of this characteristic 5 surface was pointed out by Chad Schoen, who should have been thanked in \cite{l2}.
We will call this surface the \emph{Schoen Surface}.

Now we give a table of the reduction behavior of the surfaces found in the previous sections at characteristics 2 and 3.  For the surfaces not defined over ${\Z}$, reduction mod 2 or 3 means reduction at primes lying over these rational primes.  The numbers labeling the $\mod 2$ and $\mod 3$ reductions come from \cite{l2}.  The explanation of the reduction behavior of surfaces of type $X_{11}(j)$ comes after the table.

\begin{longtable}[c]{|lll|}
\caption{Reduction of Integral Models mod $2$ and $3$}
      \\
      \hline
Integral    & Reduction & Reduction \\
Model      & mod $2$ & mod $3$ \\
\hline
\hline
\endfirsthead
        \hline
        \multicolumn{3}{l}{\small\slshape continued from previous page}\\
        \hline
Integral    & Reduction & Reduction \\
Model      & mod $2$ & mod $3$ \\
    \hline
\endhead
        \hline
        \multicolumn{3}{r}{\small\slshape continued on next  page}\\
        \hline            \endfoot
\hline
 \endlastfoot
$X_{11}(j)$  & I  & VI\\
  & II & VI bis\\
$X_{33}$& II & V\\
$X_{22}$& II & I\\
$X_{44}$& VII & I\\
$X_{3333}$ & $X_{3333}$ & III\\
$X_{222}$& I & IX\\
$X_{321A}$ & V & V\\
$X_{321B}$& V & XI\\
$X_{9111}$& $X_{9111}$ & II\\
$X_{5511}$& $X_{5511}$ & $X_{5511}$\\
$X_{8211A}$ & V & $X_{8211}$\\
$X_{8211B}$& III & $X_{8211}$\\
$X_{6321A}$& IX & VII\\
$X_{6321B}$& VIII & VII\\
$X_{4422}$& IV & $X_{4422}$\\
$X_{211}$& VI & IV\\
$X_{431}$& IX & IV\\
$X_{411}$& VI & X\\
$X_{141}$& VI & VIII
\end{longtable}

 \begin{thm}
All extremal rational elliptic surfaces over algebraically closed fields of characteristic $p$ lift to characteristic zero.
 \end{thm}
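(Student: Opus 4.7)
The plan is to prove the theorem by inspection. I would invoke Lang's explicit classification of extremal rational elliptic surfaces over algebraically closed fields of characteristic $p$ from \cite{l1} and \cite{l2}: for each surface $Y$ on Lang's list, exhibit one of the integral models constructed in Sections~\ref{sec:one} and \ref{sec:two}, together with a prime $\mp$ of its base ring lying over $p$, whose geometric reduction at $\mp$ is the Weierstrass model of $Y$. Once such a matching is in hand the lifting statement follows immediately from the definition of the integral models as flat proper families over $\spec(R)$.

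For $p > 3$ the work is essentially automatic. The first lemma preceding the theorem shows that the reduction modulo $\mp$ of any of our integral models is again an extremal rational elliptic surface, and the invariance of $j$ and $\Delta$ outside the residue characteristics $2$ and $3$ shows that the configuration of singular fibres of the reduction matches its characteristic-zero counterpart. The sole anomaly at $p = 5$ --- the given model of $X_{5511}$ reduces to the extremal surface with singular fibres II, I$_5$, I$_5$ pointed out by Schoen --- is a welcome feature rather than a problem, since that surface is precisely the extra characteristic-$5$ item on Lang's list, and our integral model supplies its lift.

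For $p = 2$ and $p = 3$ the argument reduces to consulting the table preceding the theorem and checking against Lang's characteristic-$2$ and characteristic-$3$ lists in \cite{l2} that every type appears in the appropriate column. This is the step that will require the most care, and is the main obstacle; it is also the reason we needed two integral models (the ``A'' and ``B'' versions) for surfaces such as $X_{321}$, $X_{8211}$, and $X_{6321}$, and the reason Section~\ref{sec:two} is needed at all, since $X_{22}$, $X_{33}$, $X_{44}$ have no integral model over $\Z$ yet must still account for certain characteristic-$2$ and characteristic-$3$ lifts. The Weierstrass computation for any individual reduction is routine, so the obstacle is really just the bookkeeping of aligning Lang's numeric labels against the singular-fibre configurations produced by our reductions.

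Finally, the family $X_{11}(j)$ needs a separate argument since its characteristic-zero model is a one-parameter family rather than a single surface. The construction of \S\ref{sec:onea} produces a surface with discriminant $\Delta_E (t^2 + 4t)^6$ and $j$-invariant equal to $j(E)$, so given any target value $\overline{j} \in k$ in characteristic $p$ I would lift an elliptic curve $E_0/k$ with $j(E_0) = \overline{j}$ to an elliptic curve $E$ over a characteristic-zero discrete valuation ring with good reduction at the chosen prime (standard lifting for elliptic curves, since we are not insisting on a globally good model), and then the $X_{11}$-surface attached to $E$ by the construction of \S\ref{sec:onea} reduces to the required characteristic-$p$ surface. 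This accounts for the families of types I and II in characteristic $2$ and of types VI and VI bis in characteristic $3$ listed in the table, and completes the proof.
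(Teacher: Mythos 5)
Your proposal is correct and follows essentially the same route as the paper: the table of reductions handles every type except $X_{11}(j)$, which is treated separately by lifting an elliptic curve with prescribed $j$-invariant to a characteristic-zero ring with good reduction and applying the quadratic-twist construction of \S\ref{sec:onea}, exactly as the paper does (including the identification of types I/II in characteristic $2$ and VI/VI bis in characteristic $3$ as the reductions of the $X_{11}(j)$ family). No gaps.
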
  

\begin{proof}  The only cases which are not obvious from the above table are those related to $X_{11}(j)$.  Suppose we have a surface of type $X_{11}(j)$ in characteristic $p$, $p \ne 2, 3$.  Choose an elliptic curve $E$ over some ring $R$ with good reduction at some prime ideal ${\mp}$ containing $p$, and such that $j(E) \equiv j \pmod {\mp}$.  Then perform a quadratic twist as in Section~\ref{sec:onea}.  We get an elliptic surface over $R$ with good reduction at ${\mp}$ of type $X_{11}(j)$.

Further explanation of the reduction behavior of $X_{11}(j)$ at 2 and 3 is needed.  For characteristic 2, if we produce a model of $X_{11}(j)$ as in Section~\ref{sec:onea}, where $E/R$ is chosen to have good reduction at a prime ${\mp}$ containing 2, the reduction of our model at ${\mp}$ will have discriminant $\Delta_Et^{12}$. Thus, it will be of Type I or Type II.  The Weierstrass equation of a surface of Type I is $y^2 + txy + a_3y = x^3 + tx^2 +kt^6$, $k \ne 0$, and the Weierstrass equation of Type II is $y^2  + t^3y= x^3 + t^5$.  The $j$-invariant of the general fibre of type I is $1/k$.  So if we choose such that $j(E) \equiv 1/k$, then the reduction of our model of $X_{11}(j)$ will be isomorphic (over the algebraic closure of $R/{\mp}$) to the surface of type I with $k$ as specified.  The $j$-invariant of the general fibre of the surface of type II is 0.  So if we choose $E$ so that $j(E) \equiv 0 \pmod {\mp}$, we get a surface of type II.  So all surfaces of type I or type II are liftable.  Incidentally the above discussion shows that type II in characteristic 2 is a specialization of type I, which may not have been obvious from \cite{l2}.

The situation in characteristic 3 is similar.  If we have a model of $X_{11}(j)$ as in Section~\ref{sec:onea} (we might as well choose $g_1 = g_3 = 0$ for simplicity), its reduction at a prime containing 3 will be of type VI or VI bis.  The equations for surfaces of types VI and VI bis are $y^2  = x^3 + tx^2 + kt^3$ $(k \ne 0)$ and $y^2 = x^3 + t^2x$ respectively.  The $j$-invariants are $-1/k$ and 0 respectively.  So if we choose $E/R$ and the prime ${\mp}$ of $R$ containing 3 such that $j(E) \equiv -1/k \pmod {\mp}$, then the reduction of our model of $X_{11}(j)$  will be isomorphic (over the algebraic closure of $R/{\mp}$) to a surface of type VI with $k$ as specified.  If our setup is such that $j(E) \equiv 0 \pmod {\mp}$, the reduction will be of type VI bis.  So all surfaces of type VI or type VI bis are liftable.  The above discussion shows that type VI bis is a specialization of type VI.
\end{proof}

\setcounter{section}{0}
\renewcommand{\thesection}{\Alph{section}}
\renewcommand{\thesubsection}{\thesection.\arabic{subsection}}
\section{Appendix}

In this appendix we briefly summarize the results of 
\cite{mp} and \cite{l2}.

\subsection{Characteristic $0$ and Charachteristic $\ge 5$}

In \cite{mp} Miranda and Persson prove that over $\C$ every extremal rational elliptic surface is one of the following, and for each of the following configurations of singular fibers except $I_0^*\,I_0^*$, and $II\,I_5\,I_5$ there is a unique surface with that configuration.
For the configuration $I_0^*\,I_0^*$ there is precisely one surface $X_{11}(j)$ for each $j\in \C$, and the configuration 
$II\,I_5\,I_5$ does not occur in characteristic $0$.


\setcounter{table}{0}
\renewcommand{\thetable}{\thesection.\arabic{table}}

\begin{longtable}[c]{|llcc|}
\caption{Extremal Rational Elliptic Surfaces in Char $\ge 5$}
      \\
      \hline
Surface   & Singular &Char $0$ and&\\
Name      & Fibers & Char $>5$ & Char $5$\\
\hline
\hline
\endfirsthead
        \hline
        \multicolumn{4}{l}{\small\slshape continued from previous page}\\
        \hline
Surface   & Singular & Char $0$ and & \\
Name      & Fibers &  Char $>5$ & Char $5$  \\
    \hline
\endhead
        \hline
        \multicolumn{4}{r}{\small\slshape continued on next  page}\\
        \hline            \endfoot
\hline
 \endlastfoot
$X_{22}$ & $II\,II^*$ & \checkmark & \checkmark \\
$X_{33}$ & $III\,III^*$ & \checkmark & \checkmark  \\
$X_{44}$ & $IV\,IV^*$ & \checkmark & \checkmark \\
$X_{11}(j)$ $j\in\C$ & $I_0^*\,I_0^*$ & \checkmark & \checkmark \\
$X_{211}$ & $II^*\,I_1\,I_1$ & \checkmark & \checkmark \\
$X_{321}$ & $III^*\,I_2\,I_1$ & \checkmark & \checkmark \\
$X_{431}$ & $IV^*\,I_3\,I_1$ & \checkmark & \checkmark \\
$X_{411}$ & $I_4^*\,I_1\,I_1$ & \checkmark & \checkmark \\
$X_{141}$ & $I_1^*\,I_4\,I_1$ & \checkmark & \checkmark \\
$X_{222}$ & $I_2^*\,I_2\,I_2$ & \checkmark & \checkmark \\
$X_{9111}$ & $I_9\,I_1\,I_1\,I_1$ & \checkmark & \checkmark \\
$X_{8211}$ & $I_8\,I_2\,I_1\,I_1$ & \checkmark & \checkmark \\
$X_{5511}$ & $I_5\,I_5\,I_1\,I_1$ & \checkmark & dne \\
   Schoen  & $II\, I_5\, I_5$  & dne & \checkmark \\
$X_{6321}$ & $I_6\,I_3\,I_2\,I_1$ & \checkmark & \checkmark \\
$X_{4422}$ & $I_4\,I_4\,I_2\,I_2$ & \checkmark & \checkmark \\
$X_{3333}$ & $I_3\,I_3\,I_3\,I_3$ & \checkmark & \checkmark \\
\end{longtable}

In \cite{l1,l2} it is shown that in characteristics not equal to $2$, $3$, or $5$ the results of \cite{mp} also hold.  In characteristic $5$ the results of \cite{mp} hold except that the surface $X_{5511}$ is replaced by a (unique) surface with singular fibers  $II\, I_5\, I_5$, which we call the \emph{Schoen surface}.

\subsection{Chararacteristics $2$ and $3$}

Characteristics $2$ and $3$ are quite different from the others.  

\subsubsection{Characteristic $3$}
In \cite{l1,l2} it is shown that in characteristic $3$ every extremal rational elliptic surface is one of the following, and for each of the following configurations of singular fibers except $I_0^*\,I_0^*$, there is a unique surface with that configuration.  For the configuration  $I_0^*\,I_0^*$, there is a family of surfaces $VI(k)$ parameterized by non-zero values of $k$ and one additional surface $VI$bis. 
\newpage
\begin{longtable}[c]{|ll|}
\caption{Extremal Rational Elliptic Surfaces in Char $3$}
      \\
      \hline
Surface   & Singular \\
Name      & Fibers \\
\hline
\hline
\endfirsthead
        \hline
        \multicolumn{2}{l}{\small\slshape continued from previous page}\\
        \hline
Surface   & Singular   \\
Name      & Fibers   \\
    \hline
\endhead
        \hline
        \multicolumn{2}{r}{\small\slshape continued on next  page}\\
        \hline            \endfoot
\hline
 \endlastfoot
$I$ & $II^*$  \\
$II$ & $II\, I_9$  \\
$III$ & $IV^*\,I_3$ \\
$IV$ & $II^*\, I_1$  \\
$V$ & $III^*\, III$  \\
$VI(k) $ $k\neq 0$& $I_0^*\, I_0^*$  \\
$VI$ bis & $I_0^*\, I_0^*$ \\
$VII$ & $III\, I_3\,I_6$ \\
$VIII$ & $I_1^*\,I_1\,I_4$ \\
$IX$ & $I_2^*\,I_2\,I_2$ \\
$X$ & $I_4^*\,I_1\,I_1$\\
$XI$ & $III^*\,I_1\,I_2$\\            
$X_{8211}$ & $I_8\,I_2\,I_1\,I_1$ \\
$X_{5511}$ & $I_5\,I_5\,I_1\,I_1$ \\
$X_{4422}$ & $I_4\,I_4\,I_2\,I_2$  \\
\end{longtable}

\subsubsection{Characteristic $2$}
Finally, in \cite{l1,l2} it is shown that in characteristic $2$ every extremal rational elliptic surface is one of the following, and for each of the following configurations of singular fibers except $I_4^*$, there is a unique surface with that configuration.  For the configuration  $I_4^*$, there is a family of surfaces $I(k)$ parameterized by non-zero values of $k$. 

\begin{rem}
Note that the the surfaces in characteristic $2$ do \emph{not} necessarily correspond to surfaces with the same name in characteristic $3$.
\end{rem}

\begin{longtable}[c]{|ll|}
\caption{Extremal Rational Elliptic Surfaces in Char $2$}
      \\
      \hline
Surface   & Singular \\
Name      & Fibers \\
\hline
\hline
\endfirsthead
        \hline
        \multicolumn{2}{l}{\small\slshape continued from previous page}\\
        \hline
Surface   & Singular   \\
Name      & Fibers   \\
    \hline
\endhead
        \hline
        \multicolumn{2}{r}{\small\slshape continued on next  page}\\
        \hline            \endfoot
\hline
 \endlastfoot
$I(k)$ $k\neq 0$ & $I_4^*$  \\
$II$ & $II^*$  \\
$III$ & $III\,I_8$ \\
$IV$ & $I_1^*\, I_4$  \\
$V$ & $III^*\, I_2$  \\
$VI $ & $II^*\, I_1$  \\
$VII$ & $IV\,IV^*$ \\
$VIII$ & $IV\,I_2\,I_6$ \\
$IX$ & $IV^*\,I_1\,I_3$ \\
$X_{9111}$ & $I_9\,I_1\,I_1\,I_1$  \\
$X_{5511}$ & $I_5\,I_5\,I_1\,I_1$  \\
$X_{3333}$ & $I_3\,I_3\,I_3\,I_3$ \\
\end{longtable}

\section*{Acknowledgments}

This paper began as a research project by Jeremy Ricks, a BYU undergraduate student, under the direction of Tyler Jarvis.  Unfortunately, Jeremy died in a tragic accident in September of 2001, before his work on the project was completed.  Both surviving authors were greatly impressed by Jeremy's mathematical ability and his dedication to this project.  He produced many of the models given here in a remarkably short amount of time.  We are extremely grateful to Jeremy's wife, Melinda, for providing us with Jeremy's notes.  Without them, this paper would not exist.  

We also wish to remember Professor Steven Galovich, who passed away in 2006.  Steve was the senior author's mentor at Carleton College (long before the concept of undergraduate mentoring became fashionable) and provided him with an outstanding introduction to the world of algebra and number theory.

The senior author would also like to thank C. Schoen, D. Doud, J. Grout, and K. Rubin for useful conversations on elliptic curves and surfaces, and Brigham Young University  for computer support.

Finally, we would like to thank M.~Sch\"utt and the referee for their comments on a previous version of this paper.

\end{document}